\newcommand{\Z}{{\mathbb Z}}
\newcommand{\Q}{{\mathbb Q}}
\newcommand{\bpi}{\overline{\pi}}
\newcommand{\disc}{{\operatorname{disc}}}
\newtheorem{thm}{Theorem}
\newtheorem{lem}[thm]{Lemma}
\newtheorem{cor}[thm]{Corollary}
\title{An Application of the Dedekind-Hasse Criterion}
\author{F. Lemmermeyer}
\email{hb3@ix.urz.uni-heidelberg.de}
\address{M\"orikeweg 1, 73489 Jagstzell, Germany}
\begin{document}

\begin{abstract}
In this article we show how the Dedekind-Hasse criterion may be 
applied to prove a simple result about quadratic number fields 
that usually is derived as a consequence of the theory of ideals
and ideal classes.
\end{abstract}

\maketitle

\section*{Introduction}
Let $m$ be a squarefree integer, $K = \Q(\sqrt{m}\,)$ the quadratic
number field generated by the square root of $m$, $D_m$ its ring of
integers, and $\Delta = \disc K$ its discriminant. The following
result is called the 

\begin{thm}[Dedekind-Hasse Criterion]
The domain $D_m$ is a principal ideal domain if for all 
$\alpha, \beta \in D_m \setminus \{0\}$ with $\beta \nmid \alpha$ 
and $|N\alpha| \ge |N\beta|$ there exist  $\gamma, \delta \in D_m$
such that 
\begin{equation}\label{E1}
    0 < |N(\alpha \gamma - \beta \delta)| < |N\beta|. 
\end{equation}
\end{thm}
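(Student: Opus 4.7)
The plan is to prove that every nonzero ideal $I \subseteq D_m$ is principal by a minimal-norm argument. The key observation that makes the hypothesis usable is that the combination $\alpha\gamma - \beta\delta$ appearing in (\ref{E1}) is a $D_m$-linear combination of $\alpha$ and $\beta$, and hence automatically lies in any ideal that contains both of them.

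First I would fix a nonzero ideal $I$ of $D_m$ and consider the set
\[
\{\,|N\alpha| : \alpha \in I,\ \alpha \neq 0\,\},
\]
which is a nonempty subset of the positive integers and so has a least element by well-ordering. Pick $\beta \in I \setminus \{0\}$ with $|N\beta|$ minimal; the claim is then $I = (\beta)$. The inclusion $(\beta) \subseteq I$ is immediate from $\beta \in I$, and the reverse inclusion amounts to showing that $\beta \mid \alpha$ in $D_m$ for every $\alpha \in I$.

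Here is where the hypothesis enters: suppose for contradiction that some $\alpha \in I$ is not divisible by $\beta$. Since $\alpha$ is a nonzero element of $I$, the minimality of $|N\beta|$ forces $|N\alpha| \geq |N\beta|$, so the Dedekind--Hasse hypothesis supplies $\gamma, \delta \in D_m$ satisfying (\ref{E1}). The element $\alpha\gamma - \beta\delta$ lies in $I$, and by (\ref{E1}) it is nonzero yet has absolute norm strictly less than $|N\beta|$, contradicting the choice of $\beta$. Consequently $I = (\beta)$ is principal, and $D_m$ is a PID.

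The main -- really the only -- obstacle is recognizing that a minimum-norm element is the right candidate for a generator; once that choice is fixed, the hypothesis is tailor-made to yield a smaller-norm ideal element and hence the contradiction. No feature specific to quadratic fields is used beyond the facts that the absolute norm takes values in the nonnegative integers and vanishes only on $0$, so the same argument works verbatim in any integral domain carrying such a norm.
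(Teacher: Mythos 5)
Your minimal-norm argument is correct and complete: picking $\beta \in I\setminus\{0\}$ of least absolute norm and using the hypothesis to produce the smaller-norm element $\alpha\gamma-\beta\delta \in I$ is exactly the standard proof of the Dedekind--Hasse criterion. The paper itself offers no proof of this theorem --- it quotes it as a known special case of the results of Dedekind and Hasse and only proves the main theorem that builds on it --- so your write-up supplies precisely the argument the paper takes for granted, and it does so without gaps.
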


Actually, this is only a very special case of Dedekind's and Hasse's
result, who considered more generally arbitrary number fields and even 
general rings.

For squarefree integers $m$ as above we define the Gauss bound
$$ \mu_m = \begin{cases}
           \sqrt{-\Delta/3} & \text{ if } m < 0, \\
           \sqrt{ \Delta/5} & \text{ if } m > 0.
           \end{cases} $$  
In this note\footnote{This note was written around 1985.} 
we will show how to use the Dedekind-Hasse criterion for proving 
the following 

\begin{thm}\label{T1}
Assume that for all rational primes $p$ with $2 \le p \le \mu_m$ 
with $(\frac{\Delta}p) \ne -1$ there is an element $\pi \in D_m$ 
with $p = |N\pi|$, then $D_m$ is a PID.
\end{thm}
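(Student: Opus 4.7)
The plan is to verify the Dedekind--Hasse criterion by contradiction, using a minimal counterexample. Suppose $D_m$ is not a PID; then there exist $\alpha,\beta\in D_m\setminus\{0\}$ with $\beta\nmid\alpha$, $|N\alpha|\ge|N\beta|$, and $|N(\alpha\gamma-\beta\delta)|\notin(0,|N\beta|)$ for every $\gamma,\delta\in D_m$. I would fix such a pair with $|N\beta|$ minimal. Since the set $\{\alpha\gamma-\beta\delta:\gamma,\delta\in D_m\}$ is exactly the ideal $I:=\alpha D_m+\beta D_m$, minimality says precisely that $\beta$ realises the minimum of $|N|$ on $I\setminus\{0\}$, and $(\beta)\subsetneq I$ because $\beta\nmid\alpha$.

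Next I would invoke the Gauss bound for ideals. Choosing a $\Z$-basis $\xi_1,\xi_2$ of $I$, the form
$$ g(x,y)=N(x\xi_1+y\xi_2)/N(I) $$
is an integer-valued binary quadratic form of discriminant $\Delta$, and classical reduction theory of forms furnishes a nonzero $(x',y')\in\Z^2$ with $|g(x',y')|\le\mu_m$. Combined with the minimality of $|N\beta|$, this forces $n:=|N\beta|/N(I)\le\mu_m$. Writing $\beta=x_0\xi_1+y_0\xi_2$, the gcd of $(x_0,y_0)$ must equal $1$ (else $\beta$ divided by that gcd would lie in $I$ with strictly smaller norm), so $g$ represents $\pm n$ primitively. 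A standard congruence calculation then yields that $\Delta$ is a square modulo $4n$; in particular $(\frac{\Delta}{p})\ne-1$ for every prime $p\mid n$.

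Now I would feed this into the hypothesis of Theorem~\ref{T1}. Every prime $p\mid n$ lies in $[2,\mu_m]$ with $(\frac{\Delta}{p})\ne-1$, so there exists $\pi\in D_m$ with $|N\pi|=p$; the ideal $(\pi)$ then has prime norm $p$ and is therefore a prime ideal of $D_m$. Factor $(\beta)=I\cdot J$ with $J$ an integral ideal of norm $n$. Because $p\mid N(J)$, one of the two primes above $p$---namely $(\pi)$ or $(\bpi)$---divides $J$; after possibly replacing $\pi$ by $\bpi$, we may cancel this prime to obtain $(\beta/\pi)=I\cdot J(\pi)^{-1}\subseteq I$. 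Hence $\beta/\pi$ is a nonzero element of $I$ with $|N(\beta/\pi)|=|N\beta|/p<|N\beta|$, contradicting the minimality of $(\alpha,\beta)$.

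The step I expect to be the main technical obstacle is the Gauss bound itself. For $m<0$ it is immediate: on a reduced positive definite form $(a,b,c)$ with $|b|\le a\le c$ one has $|\Delta|=4ac-b^2\ge 3a^2$, whence $a\le\sqrt{|\Delta|/3}=\mu_m$. For $m>0$ the analogous bound $\mu_m=\sqrt{\Delta/5}$ on the minimum of an indefinite binary form is more delicate and relies on reduction via continued fractions. A secondary subtlety is the small amount of ideal arithmetic in the last step---factoring $(\beta)=IJ$ and cancelling a principal prime---which is the residual ideal theory that the Dedekind--Hasse approach cannot entirely eliminate.
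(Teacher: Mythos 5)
Your argument is correct in outline, but it is a genuinely different proof from the one in the paper --- in fact it is essentially the classical ideal-theoretic argument that the paper explicitly sets out to avoid (the abstract promises a proof of a result ``that usually is derived as a consequence of the theory of ideals and ideal classes''). The paper never forms the ideal $I=(\alpha,\beta)$: it writes $\xi=\alpha/\beta=\frac{a+b\sqrt m}{c}$ and constructs $\gamma,\delta$ explicitly, reducing first to prime denominators $c$, then to $c<\mu_m$ via a Bezout relation $ad+be+cf=1$ and division with remainder in $\Z$, and finally to $c=p$ with $p\mid a^2-mb^2$ and $(\frac{\Delta}{p})\ne-1$, where the hypothesized element $\pi$ of norm $\pm p$ is used to build $\delta$ by hand. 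Your route instead needs unique factorization of ideals, ideal norms, the ideal--form correspondence, and above all the bound $\le\sqrt{\Delta/5}$ on the minimum of an indefinite binary form; you correctly flag that last ingredient as the hard point --- it is the Markov/Korkine--Zolotareff theorem, genuinely nontrivial --- whereas the paper reaches the same constant $\sqrt{\Delta/5}$ by an elementary trick: in the real case it chooses the remainder $r$ in the range $\frac c2\le|r|\le c$ rather than $|r|\le\frac c2$, so that $\frac{r^2-m}{c^2}$ is squeezed into $(-1,1)$ as soon as $c>\sqrt{4m/5}$. What each approach buys: yours is shorter and conceptually transparent once the standard machinery is granted, and it cleanly explains \emph{why} the Gauss bound and the condition $(\frac{\Delta}{p})\ne-1$ appear; the paper's is self-contained, avoids reduction theory and ideal factorization entirely, and is constructive --- the explicit $\gamma,\delta$ are exactly what drives the Bezout algorithm of Section 2, which your minimal-counterexample contradiction does not yield. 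Two small remarks on your write-up: the minimality of $|N\beta|$ over all failing pairs is never actually used (the failure of (\ref{E1}) for a single pair already says $\beta$ minimizes $|N|$ on $I\setminus\{0\}$, which is all you invoke), and you should note explicitly that $n>1$ follows from $(\beta)\subsetneq I$ before picking a prime $p\mid n$ --- you do record the strict inclusion, so this is only a presentational gap.
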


In the case $m < 0$, no prime $p < \mu_p$ can be a norm from $D_m$,
and we obtain the 

\begin{cor}
Assume that $m < 0$ and $(\frac{\Delta}p) = -1$ for all prime numbers
$p$ with $2 \le p \le \sqrt{-\Delta/3}$; then $D_m$ is a PID.
\end{cor}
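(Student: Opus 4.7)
The plan is to deduce the corollary directly from Theorem~\ref{T1}: I will show that for $m<0$ the hypothesis of the corollary forces the premise of Theorem~\ref{T1} to be vacuously satisfied. First I would note that for $m<0$ the Gauss bound specialises to $\mu_m=\sqrt{-\Delta/3}$, so the ranges of primes in the two statements coincide. Theorem~\ref{T1} then demands, for every rational prime $p$ with $2\le p\le\sqrt{-\Delta/3}$ \emph{and} $(\frac{\Delta}p)\ne-1$, that $p=|N\pi|$ for some $\pi\in D_m$. But the corollary's assumption says that every prime in this range has $(\frac{\Delta}p)=-1$, so no prime satisfies the additional condition ``$(\frac{\Delta}p)\ne-1$''. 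The set of primes requiring verification is therefore empty, the hypothesis of Theorem~\ref{T1} holds trivially, and we conclude that $D_m$ is a PID.

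I would also record the remark made in the text preceding the corollary, namely that for $m<0$ no prime $p$ with $(\frac{\Delta}p)=-1$ could ever be a norm from $D_m$. Indeed, the norm form $N\alpha=\alpha\overline{\alpha}$ on $D_m$ is positive definite, so if $p=|N\pi|$ for some $\pi\in D_m$, then $\pi\overline{\pi}=p$ is a non-trivial factorisation of $p$ in $D_m$, which forces $p$ to split or ramify in $K$, i.e.\ $(\frac{\Delta}p)\ne-1$. Thus in the imaginary case only primes with $(\frac{\Delta}p)\ne-1$ are candidates to be norms, so the restriction in Theorem~\ref{T1} is sharp and reducing its hypothesis to an empty condition is the natural route.

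There is no real technical obstacle; the corollary is a specialisation of Theorem~\ref{T1} to the imaginary case. The only subtlety is reading Theorem~\ref{T1}'s hypothesis correctly as a universally quantified implication, whose conclusion need not be verified when the premise fails for every $p$ in the admissible range.
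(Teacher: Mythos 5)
Your proof is correct and is exactly the paper's (implicit) argument: under the corollary's hypothesis no prime $p$ with $2\le p\le \mu_m=\sqrt{-\Delta/3}$ satisfies $(\frac{\Delta}p)\ne -1$, so the hypothesis of Theorem~\ref{T1} is vacuously true and $D_m$ is a PID. One small point: the remark in the text preceding the corollary is that for $m<0$ no prime $p<\mu_m$ can be a norm from $D_m$ at all (because the positive definite norm form takes values $\ge |\Delta|/4$ on elements with nonzero $\sqrt{m}$-part), which is a different observation from the one you supply about inert primes never being norms; neither is needed for the deduction itself.
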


In particular, $D_m$ is a PID for $-m = 1, 2, 3, 7, 11, 19, 43, 67, 163$.

It is also easy to see that Thm.~\ref{T1} holds whenever $D_m$ is a 
unique factorization domain; thus we find

\begin{cor}
A number ring $D_m$ is a UFD if and only if it is a PID.
\end{cor}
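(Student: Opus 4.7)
The (trivial) direction that every PID is a UFD is a standard general fact of commutative algebra, so the entire content lies in the converse. My plan is to assume $D_m$ is a UFD and verify the hypothesis of Theorem \ref{T1}, namely that every rational prime $p$ with $2 \le p \le \mu_m$ and $(\frac{\Delta}p) \ne -1$ is the absolute norm of some $\pi \in D_m$; then Theorem \ref{T1} immediately delivers the PID conclusion.

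The first step is to manufacture an auxiliary element $\theta \in D_m$ divisible by $p$ in norm but not in the ring. Writing $D_m = \Z[\omega]$ with $\omega = \sqrt{m}$ or $\omega = (1+\sqrt{m})/2$ according to $m \bmod 4$, the Kronecker-symbol condition $(\frac{\Delta}p) \ne -1$ translates into the minimal polynomial of $\omega$ having a root $b \in \Z$ modulo $p$. Then $\theta := b - \omega$ satisfies $p \mid N\theta = \theta\overline{\theta}$ (the norm equals the minimal polynomial evaluated at $b$), while $\theta \notin p D_m$ because its $\omega$-coefficient is $-1$.

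The UFD hypothesis now enters. Since $p$ divides the product $\theta\overline{\theta}$ yet divides neither factor, $p$ is not a prime element of $D_m$; being a nonzero non-unit, it must therefore be reducible (irreducibles and primes coincide in a UFD), so $p = \pi_1 \pi_2$ with neither $\pi_i$ a unit. Taking absolute norms yields $p^2 = |N\pi_1| \cdot |N\pi_2|$ with each factor exceeding $1$, so both equal $p$, and $\pi_1$ is the required norm-$p$ element.

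The only delicate point is uniform handling of the prime $p = 2$ alongside odd primes, since the correspondence between $(\frac{\Delta}p) \ne -1$ and the factorization of the minimal polynomial of $\omega$ modulo $p$ involves a short case split on $\Delta \bmod 8$ governing the Kronecker symbol at $2$; once that bookkeeping is in hand, the rest of the argument is essentially mechanical.
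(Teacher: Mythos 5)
Your proposal is correct and follows exactly the route the paper intends: it verifies the hypothesis of Theorem \ref{T1} under the UFD assumption (via the standard argument that $p \mid N(b-\omega)$ with $p$ dividing neither factor forces $p$ to be reducible, hence a product of two norm-$p$ elements), which is precisely the step the paper dismisses as ``easy to see.'' Your write-up simply supplies the details the paper omits.
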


\section{Proof of the Main Theorem}

Since $N\beta \ne 0$, the condition (\ref{E1}) is equivalent to 
\begin{equation}\label{E2}
0 < |N(\xi \gamma - \delta)| < 1 \quad \text{ for all }
  \xi = \frac{\alpha}{\beta} = \frac{a+b\sqrt{m}}c \in K \setminus D_m;
\end{equation}
the exclusion of $\xi \in D_m$ comes from the condition that 
$\beta \nmid \alpha$. Write $\xi = \frac{a + b \sqrt{m}}c$ 
for integers $a, b, c$ with $c \ge 2$. Without loss of 
generality we may assume that $\gcd(a,b,c) = 1$.

\begin{lem}\label{L1}
It is sufficient to prove (\ref{E2}) for prime values of $c$.
\end{lem}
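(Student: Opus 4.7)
The plan is strong induction on $c$. The base case, $c$ prime, is the hypothesis. For the inductive step, assume $c$ is composite and that (\ref{E2}) has been established for every $\xi'$ of lowest-terms denominator below $c$. Write $c = pc'$ with $p$ prime and $2 \le c' < c$, and form the auxiliary element
\[
  \eta = \frac{a + b\sqrt{m}}{c'}.
\]
Because $c' \mid c$, any common prime divisor of $a, b, c'$ also divides $\gcd(a,b,c) = 1$, so $\gcd(a,b,c') = 1$ and $\eta$ is already written in lowest terms, with denominator $c' < c$.

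The crucial step is then short. If $\eta \notin D_m$, the induction hypothesis supplies $\gamma_1, \delta_1 \in D_m$ with $0 < |N(\eta\gamma_1 - \delta_1)| < 1$. Choosing $\gamma = p\gamma_1$ and $\delta = \delta_1$ one computes
\[
  \xi\gamma - \delta = \frac{a+b\sqrt{m}}{pc'}\cdot p\gamma_1 - \delta_1 = \eta\gamma_1 - \delta_1,
\]
whose norm lies in $(0,1)$, so (\ref{E2}) transfers from $\eta$ to $\xi$. The whole argument therefore reduces to showing that for composite $c$ the factorization $c = pc'$ can always be chosen so that $\eta \notin D_m$.

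I expect this last point to be the main obstacle, and it is handled by a case split on $m \bmod 4$. For $m \equiv 2, 3 \pmod 4$ one has $D_m = \Z[\sqrt m]$, and $\eta \in D_m$ would force $c' \mid a$ and $c' \mid b$, contradicting $\gcd(a,b,c') = 1$ with $c' \ge 2$; so the reduction is automatic. For $m \equiv 1 \pmod 4$, writing $\eta$ in the basis $\{1, \omega\}$ with $\omega = (1+\sqrt m)/2$, a direct calculation shows $\eta \in D_m$ happens exactly when $c' = 2$ and $a, b$ are both odd. If $c$ has any odd prime factor one switches to the factorization $p = 2, c' = c/2$: the new denominator $c/2$ retains that odd factor, so cannot equal $2$, and we are back in the benign case. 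The sole residual situation, $c = 4$ with $m \equiv 1 \pmod 4$ and $a, b$ both odd, admits no such reshuffling; there a direct construction of $\gamma, \delta$ is required, exploiting the structure of $D_m/2D_m$ together with the hypothesis of Theorem~\ref{T1} (specifically, that $2$ is a norm whenever $(\tfrac{\Delta}{2}) \neq -1$) to produce a suitable approximant.
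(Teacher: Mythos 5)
Your reduction step is essentially the one in the paper: factor $c$, absorb the complementary factor into $\gamma$, and observe that the smaller-denominator element $\eta$ can only fall into $D_m$ when $c'=2$, $m\equiv 1\bmod 4$ and $a,b$ both odd, which pins the genuinely exceptional case down to $c=4$. Up to that point the argument is sound (the full strong induction is not even needed --- one descent step to a prime denominator or to the $c=4$ case suffices --- but that is cosmetic).

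The gap is in the residual case $c=4$, $m\equiv 1\bmod 4$, $a,b$ odd, which you only sketch, and the tool your sketch relies on is unavailable in half of it. You propose to use the hypothesis of Theorem~\ref{T1}, ``that $2$ is a norm whenever $(\frac{\Delta}2)\ne -1$.'' But when $m\equiv 5\bmod 8$ one has $(\frac{\Delta}2)=-1$: the prime $2$ is inert, no element of norm $\pm 2$ exists, and the hypothesis of the theorem says nothing. That sub-case requires a different, direct computation: since $a,b$ are odd and $m\equiv 5\bmod 8$ one has $a^2-mb^2\equiv 4\bmod 8$, so writing $a^2-mb^2=8\delta+4$ and taking $\gamma=\frac{a-b\sqrt{m}}2\in D_m$ gives $\xi\gamma-\delta=\frac12$, of norm $\frac14$. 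Even in the sub-case $m\equiv 1\bmod 8$, where your plan is the right one, the hypothesis of Theorem~\ref{T1} only supplies a norm-$2$ element when $2\le\mu_m$; when $2>\mu_m$ you must exhibit it by hand ($m=17$: $\pi=\frac{5+\sqrt{17}}2$; $m=-7$: $\pi=\frac{1+\sqrt{m}}2$), and for $m<0$ with $2<\sqrt{-\Delta/3}$ you must instead note that no element of $D_m$ has norm $\pm 2$, so the theorem's hypothesis is vacuous. None of this is fatal --- the missing computations are short --- but as written, the one case you yourself flag as ``the main obstacle'' would fail for all $m\equiv 5\bmod 8$.
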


\begin{proof}
Assume that $c = c_1c_2$ is a factorization of $c$ with $c_1, c_2 \ge 2$.
Then at least one of $\frac{a+b\sqrt{m}}{c_1}$ or $\frac{a+b\sqrt{m}}{c_2}$
is not in $D_m$ unless $c_1 = c_2 = 2$, $m \equiv 1 \bmod 4$, and 
$a \equiv b \bmod 2$. We treat these cases separately.
\subsubsection*{$\frac{a+b\sqrt{m}}{c_1} \in K \setminus D_m$.} 
Assume that we can find $\gamma_1, \delta \in D_m$ with
      $$ 0 < \Big|N\Big(\frac{a+b\sqrt{m}}{c_1} 
                 \gamma_1 - \delta \Big) \Big| < 1. $$
      Setting $\gamma = c_2 \gamma_1$ we find
      $$ 0 < \Big|N\Big(\frac{a+b\sqrt{m}}{c} 
                 \gamma - \delta \Big) \Big| < 1 $$
       as desired.
\subsubsection*{$m \equiv 5 \bmod 8$, $c = 4$, $a \equiv b \equiv 1 \bmod 4$.}
Since $a^2 - mb^2 \equiv 4 \bmod 8$ there is an integer $\delta$ with
$a^2 - mb^2 = 8\delta + 4$. Set $\gamma = \frac{a-b\sqrt{m}}2$; then
$$ N(\xi \gamma - \delta) = N\Big( \frac12 \Big) = \frac14, $$
hence (\ref{E2}) is satisfied.
\subsubsection*{$m \equiv 1 \bmod 8$, $c = 4$, $a \equiv b \equiv 1 \bmod 4$.}
      Then $(\frac{\Delta}2) = +1$, hence there is nothing to prove
      in the case $\Delta < 0$ and $2 < \sqrt{-\Delta/3}$. In the 
      remaining cases there exists an element $\pi = \frac{x+y\sqrt{m}}2$
      with $|N(\pi)| = 2$:
      \begin{itemize}
      \item $m < 0$, $2 > \sqrt{-\Delta/3}$: then $m = -7$, and we can take
            $x = y = 1$.
      \item $m > 0$, $2 > \sqrt{\Delta/5}$: then $m = 17$, and we can take
            $x = 5$, $y = 1$.
      \item $m > 0$, $2 < \sqrt{\Delta/5}$: here the existence follows 
            from the assumption of the theorem.
      \end{itemize}
      Now set $\gamma = 1$ and
      $\delta = \frac12(\frac{a-x}2 + \frac{b-y}2 \sqrt{m}\,)$; then
      $ \xi \gamma - \delta = \frac{x+y\sqrt{m}}4 = \frac{\pi}2$,  hence
      $0 < |N(\xi  \gamma - \delta) = \big|N\big(\frac{\pi}2\big)\big| 
         = \frac12 < 1$ as desired.

This finishes the proof of Lemma \ref{L1}.
\end{proof}

Our next result is

\begin{lem}\label{L2}
It is sufficient to verify (\ref{E2}) for $c < \mu_m$.
\end{lem}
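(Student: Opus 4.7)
My plan is to verify (\ref{E2}) directly for every prime $c \ge \mu_m$, by analyzing the $\Z$-lattice
\[
\Lambda \;=\; \xi D_m + D_m \;\subset\; K,
\]
whose elements are precisely the differences $\xi\gamma-\delta$ with $\gamma,\delta\in D_m$. The key observation is that any non-zero $v\in\Lambda$ with $|Nv|<1$ must lie outside $D_m$ (since a non-zero $\eta\in D_m$ satisfies $|N\eta|\ge 1$), so writing $v=\xi\gamma-\delta$ supplies the required pair; it therefore suffices to produce a short non-zero vector in $\Lambda$.

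Because $c\xi=a+b\sqrt{m}\in D_m$, we have $D_m\subseteq\Lambda\subseteq\frac{1}{c}D_m$, and $\Lambda/D_m$ is a cyclic $D_m$-module killed by the rational prime $c$. Hence $[\Lambda:D_m]\in\{c,c^2\}$: either $\alpha=a+b\sqrt{m}$ is a unit modulo $cD_m$ (index $c^2$), or $\gcd(\alpha,c)=\mathfrak{p}$ is a prime ideal above $c$ (index $c$). In the first case, B\'ezout furnishes $\gamma,\delta\in D_m$ with $\alpha\gamma+c\delta=1$, i.e.\ $\xi\gamma+\delta=1/c$, and $|N(1/c)|=1/c^2<1$, so we are already done.

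In the remaining case $\Lambda=\frac{1}{c}\mathfrak{p}$ is a fractional ideal of covolume $\sqrt{|\Delta|}/(2c)$ in the standard embedding of $K$. For $m<0$, the Hermite bound for rank-$2$ lattices (Hermite constant $2/\sqrt{3}$) provides a non-zero $v\in\Lambda$ with
\[
|Nv| \;\le\; \frac{\sqrt{-\Delta/3}}{c} \;=\; \frac{\mu_m}{c},
\]
while for $m>0$ the Markov--Hurwitz minimum for indefinite binary quadratic forms delivers the analogous bound $|Nv|\le\sqrt{\Delta/5}/c=\mu_m/c$. In both settings the hypothesis $c\ge\mu_m$ forces $|Nv|\le 1$.

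The main obstacle is promoting $|Nv|\le 1$ to the strict inequality $|Nv|<1$ in the boundary case $c=\mu_m$. Equality in Hermite's bound forces $\Lambda\subset K$ to be similar to a hexagonal lattice, which requires $\zeta_6\in K$ and hence $K=\Q(\sqrt{-3})$; equality in Markov--Hurwitz forces $\Lambda$ to be equivalent to the golden-ratio form, which restricts one to $K=\Q(\sqrt{5})$. In both exceptional fields $D_m$ is already Euclidean and nothing is at stake. One should also note that the real-quadratic case genuinely requires the \emph{sharp} constant $\sqrt{5}$ rather than the coarser Minkowski constant $2$; this is precisely what dictates the asymmetric definition of $\mu_m$.
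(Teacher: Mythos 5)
Your argument is correct, but it takes a genuinely different route from the paper. The paper's proof is entirely explicit: from $\gcd(a,b,c)=1$ it picks $d,e,f$ with $ad+be+cf=1$, sets $\gamma=e+d\sqrt{m}$ (or the half-integral analogue when $m\equiv 1\bmod 4$), so that $\xi\gamma-\delta=\frac{r+\sqrt{m}}{c}$ with $r\equiv ae+mbd \bmod c$, and then chooses the representative with $|r|\le c/2$ for $m<0$ but with $c/2\le |r|\le c$ for $m>0$; the inequality $0<|r^2-m|<c^2$ then falls out for $c>\mu_m$. You instead read off the minimum of the norm form on the lattice $\Lambda=\frac1c(\alpha,c)$ from the sharp constants for binary quadratic forms: Hermite's $\sqrt{|d|/3}$ in the definite case and the Korkine--Zolotareff/Markov bound $\sqrt{d/5}$ in the indefinite case. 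What this buys is a conceptual explanation of the constants $3$ and $5$ in the definition of $\mu_m$ and of why the two signs are treated asymmetrically; what it costs is (i) reliance on the nontrivial $\sqrt{5}$ theorem, which the paper circumvents by the elementary device of taking $|r|\in[c/2,c]$ inside the one-parameter family $\frac{r+\sqrt m}{c}$ and still attains the same constant, (ii) the explicit formulas for $\gamma,\delta$, which the paper reuses in its algorithmic section, and (iii) the ideal-theoretic language that the paper is deliberately avoiding (see the abstract), although your index computation could be rephrased purely in terms of lattices. Two smaller remarks: your covolume $\sqrt{|\Delta|}/(2c)$ is the imaginary-quadratic normalization; for $m>0$ the cleaner invariant is the form discriminant $\Delta/c^2$ of $\Lambda$, which yields the same bound $\mu_m/c$. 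And the boundary case $c=\mu_m$ never actually occurs for a prime $c\ge 2$, since $\mu_m=c$ would force $\Delta=-3c^2$ or $\Delta=5c^2$, impossible for a fundamental discriminant; so your discussion of the equality cases of Hermite and Markov, while sound, is vacuous.
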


\begin{proof}
Since $\gcd(a,b,c) = 1$ there exist integers $d, e, f$ with 
$ad + be + cf = 1$. We distinguish two cases.
\subsection*{1. $m \equiv 2, 3 \bmod 4$.} By division with remainders in 
 the rational integers there exist integers $q, r$ with 
 $$ ae + mbd = cq+r, \quad \text{where} \quad
    \begin{cases}
            0 \le |r| \le \frac c2 & \text{ if } m < 0, \\
            \frac c2 \le |r| \le c & \text{ if } m > 0.
     \end{cases} $$
 Setting $\gamma = e + d \sqrt{m}$ and $\delta = q - f \sqrt{m}$ we find
 $$ \xi \gamma - \delta 
     = \frac{(ae + mbd - cq) + (ad + be + cf)\sqrt{m}}c 
     = \frac{r + \sqrt{m}}c, $$
 hence  $N(\xi \gamma - \delta) = \frac{r^2 - m}{c^2}$. 
\begin{itemize}
\item If $m < 0$ and $c > \sqrt{-\Delta/3} = \sqrt{-4m/3}$, then
      $$  0 <  N(\xi \gamma - \delta) = \frac{r^2 - m}{c^2}
            = \frac{r^2 + |m|}{c^2} < \frac{c^2 + 3c^2}{4c^2} = 1. $$
\item If $m > 0$ and $c > \sqrt{\Delta/5} = \sqrt{4m/5}$, then 
      $$ 0 <  N(\xi \gamma - \delta) = \frac{r^2 - m}{c^2} \quad
        \begin{cases} 
              \ge \frac{(c/2)^2 - m}{c^2} > -1, \\
               <  \frac{r^2}{c^2} \le 1. \end{cases} $$
\end{itemize}
This finishes the proof of Lemma \ref{L2} in the case $\Delta = 4m$.
\subsection*{2. $m \equiv 1 \bmod 4$.}
We claim that we can choose the integers $d, e, f$ with 
$ad + be + cf = 1$ in such a way that $d \equiv e \bmod 2$. In fact,
if $d \equiv e + 1 \bmod 2$ then either $c$ is odd or $c = 2$
(by Lemma \ref{L1}). If $c$ is odd we set $e' = e+c$ and $f' = f-b$;
then $ad + be' + cf' = 1$ and $d \equiv e' \bmod 2$. If $c = 2$
we must have $a \equiv b +1 \bmod 2$ (otherwise $\xi \in D_m$) and set
$d' = b+d$ and $e' = e-a$; then $ad' + be' + cf = 1$ and 
$d' \equiv e' \bmod 2$.

Now there are integers $q, r$ with $ ae + mbd = cq+r$, where we choose
$r$ in such a way that $q \equiv f \bmod 2$ and 
$$ \begin{cases}
            0 \le |r| \le c & \text{ if } m < 0, \\
            c \le |r| \le 2c & \text{ if } m > 0.
     \end{cases} $$
Setting $\gamma = \frac{e + d \sqrt{m}}2 \in D_m$ and
$\delta = \frac{q-f\sqrt{m}}2 \in D_m$ we verify (\ref{E2}) exactly
as in the case $m \equiv 2, 3 \bmod 4$.
\end{proof}

The final step in the proof of Thm. \ref{T1} is

\begin{lem}
It is sufficient to verify (\ref{E2}) in the case where
$c = p$ is prime with $a^2 - mb^2 \equiv 0 \bmod p$ and 
$(\frac{\Delta}p) \ne -1$.
\end{lem}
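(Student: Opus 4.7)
The plan is to split the claim into two reductions, both carried out using the conjugate-type multiplier $\gamma = a - b\sqrt{m} \in D_m$; this choice makes $\xi\gamma = (a^2-mb^2)/p$ a pure rational, so that verifying (\ref{E2}) amounts to estimating the distance from $(a^2-mb^2)/p$ to the nearest integer.

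First I would dispose of the sub-case $p \nmid a^2 - mb^2$. Picking $\delta \in \Z$ so that $r := a^2 - mb^2 - p\delta$ satisfies $|r| \le p/2$, the non-divisibility hypothesis gives $r \ne 0$, whence
$$ 0 \;<\; |N(\xi\gamma - \delta)| \;=\; \frac{r^2}{p^2} \;\le\; \frac14 \;<\; 1, $$
and (\ref{E2}) is verified directly. So the only cases remaining are those with $p \mid a^2 - mb^2$.

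Second, I would show that in each such remaining case the condition $(\frac{\Delta}{p}) \ne -1$ holds automatically, so that what survives is exactly the configuration asserted by the lemma. For odd $p$ one has $(\frac{\Delta}{p}) = (\frac{m}{p})$; if this were $-1$ then $m$ would be a non-residue mod $p$, and $a^2 \equiv mb^2 \pmod{p}$ would force $p \mid b$ and then $p \mid a$, contradicting $\gcd(a,b,p) = 1$. For $p = 2$ with $m \equiv 2, 3 \pmod{4}$ one has $\Delta = 4m$, so $(\frac{\Delta}{2}) = 0 \ne -1$ trivially. For $p = 2$ with $m \equiv 1 \pmod{4}$, the requirement $\xi \notin D_m$ forces $a \not\equiv b \pmod{2}$, whence $a^2 - mb^2$ is odd and this sub-case is vacuous.

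The main delicacy will be the $p = 2$ analysis, where both the Kronecker-symbol convention and the precise shape of $D_m$ (depending on $m \bmod 4$) have to be invoked together; the odd-prime step is a one-line application of quadratic residues via the identity $(\frac{\Delta}{p}) = (\frac{m}{p})$.
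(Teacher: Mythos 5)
Your proposal is correct and follows essentially the same route as the paper: the same multiplier $\gamma = a - b\sqrt{m}$ with a nearest-integer $\delta$ disposes of the case $p \nmid a^2 - mb^2$, and the second reduction is the contrapositive of the paper's observation that $(\frac{\Delta}{p}) = -1$ makes $a^2 \equiv mb^2 \bmod p$ unsolvable for odd $p$ (given $\gcd(a,b,p)=1$) and forces $\xi \in D_m$ when $p = 2$. Your treatment of $p=2$ is, if anything, slightly more carefully itemized than the paper's.
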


\begin{proof}
Assume that $c \nmid (a^2 - mb^2)$; then $a^2 - mb^2 = cq+r$ for integers
$q, r$ with $0 < |r| \le \frac c2$, and we set $\gamma = a - b \sqrt{m}$
and $\delta = q$. Then we obtain
$$ \xi \gamma - \delta = \frac{a^2 - mb^2}c -  = \frac rc \ne 0, $$
and the inequalities (\ref{E2}) are easily verified.

If $(\frac{\Delta}c) = -1$, the congruence $a^2 - mb^2 \equiv 0 \bmod c$
is not solvable for odd primes $c$. If $c = 2$, on the other hand,
then $(\frac{\Delta}c) = -1$ implies $\Delta = m \equiv 5 \bmod 8$,
and $a \equiv b \equiv 1 \bmod 2$ implies $\beta \mid \alpha$. 
\end{proof}

For the proof of Thm.~\ref{T1} it remains to take care of the
prime values $c = p < \mu_m$ with $p \mid \Delta$ or 
$(\frac{\Delta}p) = +1$. By assumption there is an element 
$\pi \in D_m$ with $|N\pi| = p$. For negative discriminants 
this is impossible, hence we only have to consider the case $m > 0$. 
We have to show that we can satisfy (\ref{E2}) for primes $c = p$ 
with $p \mid \Delta$.

\subsection*{1. The case $c = 2$.}
If $m \equiv 1 \bmod 4$, then $a^2 - mb^2 \equiv 0 \bmod 2$ implies
$a \equiv b \bmod 2$, hence $\beta \mid \alpha$ and $\xi \in D_m$.

If $m \equiv 2, 3 \bmod 4$, $a^2 - mb^2 \equiv 0 \bmod 2$ and 
$c < \sqrt{4m/5}$, then there is a $\pi = x + y \sqrt{m} \in D_m$
with $2 = |x^2 - my^2|$. We easily check that $a \equiv x$ and 
$b \equiv y \bmod 2$, and by setting $\gamma = 1$ and 
$\delta = \frac{a-x}2 + \frac{b-y}2 \sqrt{m} \in D_m$ we find
$$ |N(\xi \gamma - \delta)| = \Big| \frac{x^2 - my^2}4\Big| = \frac12 $$
as desired.

\subsection*{2. The case $c = p$ for odd primes $p$.}
Assume that $c = p$ is an odd prime, $m > 0$, $a^2 - mb^2 \equiv 0 \bmod p$
and $p < \sqrt{\Delta/5}$. By assumption there is a 
$\pi = \frac{x+y\sqrt{m}}2 \in D_m$ with $|N\pi| = p$. 
\subsubsection*{Case I. $p \nmid m$.} 
If we had $p \mid a$, then we also would have $p \mid b$ (and conversely), 
hence $\beta \mid \alpha$. Thus $p \nmid ab$. From $p = |\frac{x^2 - my^2}4|$
we deduce that $x^2 \equiv my^2 \bmod p$; since we also have 
$a^2 \equiv mb^2 \bmod p$ we must have 
$\frac{x}{2a} \equiv \pm \frac{y}{2b} \bmod p$. Replacing $y$ by $-y$
if necessary we may assume that, in this congruence, the plus sign holds;
letting $z$ denote an integer with $z \equiv \frac{x}{2a} \bmod p$ we find
$$ (a + b \sqrt{m}\,) z \equiv \frac{x+y\sqrt{m}}2 = \pi \bmod p. $$
Thus there is a $\delta \in D_m$ with $(a + b \sqrt{m}\,) z =  \pi + p \delta$.
We now set $\gamma = z$ and find
$$ \xi \gamma - \delta 
   = \frac{a+b\sqrt{m}}p z - \delta = \frac{x+y\sqrt{m}}{2p}, $$
which immediately shows that (\ref{E2}) is satisfied.

\subsubsection*{Case II. $p \mid m$.} 
Since $p \mid (a^2 - mb^2)$ we must have $p \mid a$. As before, $p \mid b$
would imply $\beta \mid \alpha$, hence $p \nmid b$. Since $m$ is squarefree,
we must have $p^2 \nmid (a^2 - mb^2)$, i.e., $\gcd(\frac{a^2-mb^2}p,p) = 1$.
Thus there exist integers $r, s$ with 
\begin{equation}\label{Est}
 \frac{a^2-mb^2}{p} \cdot r + ps = 1.
\end{equation}
Since $p \mid m$, the prime $p$ ramifies in $D_m$, hence $\pi \mid \sqrt{m}$
and therefore $\pi \mid a - b\sqrt{m}$ since $p \mid a$. Dividing (\ref{Est})
through by $\pi$ we find 
$$ \frac{a+b\sqrt{m}}p \cdot \frac{a-b\sqrt{m}}\pi \cdot r \pm \bpi s
    = \frac1{\pi}, $$
where $\bpi$ is the conjugate of $\pi$ and thus satisfies 
$\pi \bpi = N\pi = \pm p$. Setting $\gamma = \frac{a-b\sqrt{m}}{\pi} \cdot r$
and $\delta = \pm \bpi s$ we find that (\ref{E2}) is satisfied.

This finishes the proof of Theorem \ref{T1}.

\section{Applications}

Assume now that $D_m$ is a UFD. Then for all $\alpha, \beta \in D_m$
there exists a $\rho \in D_m$ with $(\rho) = (\alpha,\beta)$, and
$\lambda, \mu \in D_m$ with 
\begin{equation}\label{EBez}
   \alpha \lambda + \beta \mu = \rho. 
\end{equation}

In this section we will show that there is an algorithm for computing 
a Bezout representation (\ref{EBez}) using the Euclidean algorithm in $\Z$ 
and the prime elements $\pi$ in Theorem~\ref{T1} whose norms lie below
the Gauss bound.

In fact, given $\alpha$ and $\beta$ as above we can compute, as in the
proof of Theorem~\ref{T1}, elements $\gamma_0, \delta_0 \in D_m$ with
$\rho_1 = \alpha \gamma_0 - \beta \delta_0$ and $0 < |N\rho_1| < |N\beta|$.
If $\rho_1 \mid \beta$, then we also have $\rho_1 \mid \alpha$, and it
follows that $(\alpha,\beta) = (\rho_1)$, and that (\ref{EBez}) 
holds with $\lambda = \gamma_0$ and $\mu = \delta_0$.

If $\rho \nmid \beta$, then $|N\rho_1| < |N\beta|$ shows that we can
apply Thm.\ref{T1} to the pair $(\beta, \rho)$, and we can find
$\gamma_1, \delta_1 \in D_m$ with 
$$ \rho_2 = \beta \gamma_1 - \rho_1 \delta_1, \qquad 
    0 < |N\rho_2| < |N\rho_1|. $$
If $\rho_2 \mid \rho_1$, then $(\alpha,\beta) = (\rho_2)$,
and (\ref{EBez}) holds with $\lambda = -\gamma_0 \delta_1$ and 
$\mu = \gamma_1 + \delta_0 \delta_2$.

If $\rho_2 \nmid \rho_1$ we can apply Thm.~\ref{T1} again; since
the norm cannot decrease indefinitely, we eventually must find
that $\rho_n \mid \rho_{n-1}$. Then $(\alpha,\beta) = (\rho_n)$,
and by working backwards we find, in the usual way, the Bezout
elements $\lambda$ and $\mu$. 

\subsection*{Computing Prime Elements}

Assume that $p$ is a prime with $p > \mu_m$, and that we know an 
integer $x$ with $x^2 \equiv m \bmod p$. If $D_m$ is a UFD, then 
we can compute an element $\pi \in D_m$ with $|N\pi|  = p$ as
follows: set $\alpha = p$ and $\beta = x - \sqrt{m}$; then 
$(\pi) = (\alpha,\beta)$ for some $\pi \in D_m$ with norm $\pm p$.

\medskip\noindent{\bf Example.}
Let $m = 14$, $p = 137$, $x = 39$; then $\alpha = 137$ and 
$\beta = 39 - \sqrt{14}$. We find 
$\frac{137}{39 - \sqrt{14}} = \frac{39+\sqrt{14}}{11}$, hence
$a = 39$, $b = 1$, $c = 11$; we choose $d = 0$, $e = 12$, $f = -1$
and find $ad + be + cf = 1$. Moreover 
$ae = 468 = qc+r = 43 \cdot 11 - 5$, hence $q = 43$ and $r = -5$
(we choose $r$ in such a way that it minimzes $|r^2 - m|$), and
$\gamma_0 = 12$, $\delta_0 = 43 + \sqrt{14}$. Thus we find
\begin{align*}
  \rho_1 & = \alpha \gamma_0 - \beta \delta_0
           = 137 \cdot 12 - (39 - \sqrt{14}\,)(43 + \sqrt{14}\,) \\
         & = -19 + 4 \sqrt{14}. 
\end{align*}
Since $\frac{\beta}{\rho_1} = - 5 - \sqrt{14}$ we are already done:
$$ (137,39 - \sqrt{14}\,) = (-19 + 4 \sqrt{14}\,),$$
and in fact we have $N(-19 + 4 \sqrt{14}\,) = 137$.

\bigskip

\end{document}